\newtheorem{theorem}{Theorem}[section]
\newtheorem{proposition}[theorem]{Proposition}
\newtheorem{definition}{Definition}[section]
\theoremstyle{remark}
\newtheorem{remark}[theorem]{Remark}
\theoremstyle{example}
\numberwithin{equation}{section}
\begin{document}


\begin{frontmatter}

%

\title{\textcolor{green}{A skew-symmetric energy and entropy} stable formulation of the compressible Euler equations}

\author[sweden,southafrica]{Jan Nordstr\"{o}m}
\cortext[secondcorrespondingauthor]{Corresponding author}
\ead{jan.nordstrom@liu.se}
\address[sweden]{Department of Mathematics, Applied Mathematics, Link\"{o}ping University, SE-581 83 Link\"{o}ping, Sweden}
\address[southafrica]{Department of Mathematics and Applied Mathematics, University of Johannesburg, P.O. Box 524, Auckland Park 2006, Johannesburg, South Africa}

\begin{abstract}
We show that a specific skew-symmetric form of nonlinear hyperbolic problems leads to energy and \textcolor{green}{entropy bounds. Next, we exemplify by considering the compressible Euler equations in primitive variables, transform them to skew-symmetric form and show how to obtain energy and entropy estimates. Finally we show that the skew-symmetric formulation lead to energy and entropy} stable discrete approximations if the scheme is formulated on summation-by-parts form. 
\end{abstract}

\begin{keyword}
Nonlinear hyperbolic problems \sep skew-symmetric form \sep compressible Euler equations \sep  energy stability \sep entropy stability  \sep summation-by-parts
\end{keyword}


\end{frontmatter}


\section{Introduction}

The energy method applied to linear  \textcolor{blue}{initial boundary value problems (IBVPs)} lead to well posed boundary conditions and energy estimates \cite{kreiss1970,kreiss1989initial,Gustafsson1978,gustafsson1995time,oliger1978,nordstrom2020,nordstrom_roadmap,nordstrom2005}. 
 \textcolor{blue}{The energy analysis uses integration-by-parts (IBP) as the main tool and leads to an estimate in an $L_2 $ equivalent norm. Symmetric matrices and a symmetrising matrix are normally required which can be hard to find in the nonlinear case (although exeptions exist \cite{nordstrom2021linear}).
The most common procedure to obtain estimates for nonlinear problems is to use the entropy stability theory \cite{godunov1961interesting,volpert1967,kruzkov1970,dafermos1973entropy,lax1973,harten1983,tadmor1984,Tadmor1987,Tadmor2003}.  
The entropy analysis needs both IBP and the chain rule and aims for conservation of mathematical entropy (a convex function more or less related to the physical entropy). In combination with certain sign requirements (for compressible flow the density and temperature must be positive) it leads to $L_2 $ estimates, otherwise not. It can with relative ease be applied to many nonlinear equations without specific symmetry requirements on the involved matrices.}

In this paper we will apply  \textcolor{blue}{and extend} the general stability theory developed in \cite{nordstrom2022linear-nonlinear}. This theory is valid for both linear and nonlinear problems and extend the use of the energy method to nonlinear problems. It is very direct, easy to understand and leads to $L_2 $ estimates. The only requirement for the energy bound is that a certain skew-symmetric form of the governing equations exist. It was shown in \cite{nordstrom2022linear-nonlinear} that this form exists for the velocity-divergence form of the incompressible Euler equations and could be derived for the shallow water equations  \textcolor{green}{(SWEs). One} drawback with this procedure is that the required skew-symmetric form can be quite complicated to derive. In this paper we will explain this procedure in detail \textcolor{green}{and use} the compressible Euler equations \textcolor{green}{as an example}. Once the skew-symmetric formulation is obtained, an energy bound follows by applying IBP, without using the chain rule and without sign requirements.  \textcolor{blue}{Although we focus on energy estimates, we show that the new formulation also allows for a mathematical (or generalised) entropy conservation and bound}. By discretising the equations \textcolor{green}{in space} using summation-by-parts (SBP) operators \cite{svard2014review,fernandez2014review}, nonlinear stability follows by discretely mimicking the IBP procedure. 

\textcolor{red}{Skew-symmetric formulations for parts or the whole set of governing flow equations have drawn interest previously. In \cite{Petr2007}, a similar set of variables, were considered. $L_2 $ continuous estimates were obtained for a subset of the variables. The discrete aspect was discussed but not analysed.  Aiming for simulation of turbulence, also \cite{Rozema2014} considered similar variables. Focus was on various conservation properties which were deduced from an a' priori assumption that an energy bound existed. The discrete conservation and stability aspects were discussed, and supporting calculations were provided, but no proofs were given. A related ambition using conventional primitive variables were provided in \cite{Sesterhenn2014}. Focus was again on conservation properties, in particular on preservation of kinetic energy in aeroacoustic calculations and supporting calculations were provided. Also in \cite{Halpern2018} primitive variables were used and skew-symmetry was targeted in order to preserved moments in plasma physics calculations. Neither in \cite{Sesterhenn2014} nor  \cite{Halpern2018}  were proofs provided.}  

\textcolor{green}{The papers \cite{Petr2007,Rozema2014,Sesterhenn2014,Halpern2018} (and references therein) contain fragments of the general theory for nonlinear hyperbolic problems presented in \cite{nordstrom2022linear-nonlinear} and in this paper. This theory include the compressible Euler equations which we use in this paper to exemplify the whole chain of actions leading to nonlinear stability.  As stated above, The only requirement for the continuous and discrete bounds is that a certain skew-symmetric form of the governing equations exist. We show in detail how to arrive at this formulation}. The remaining part of paper is organised as follows: In Section~\ref{sec:theory} we shortly reiterate the main theoretical findings in \cite{nordstrom2022linear-nonlinear} and outline the general procedure \textcolor{green}{for obtaining energy and entropy bounds. After that, we leave the general theory and we move to the compressible Euler equations which we use as the prime example}. We proceed in Section~\ref{sec:skew-eq} to choose an appropriate solution norm  on which we base our choice of new dependent variables. With a suitable form of the norm, we derive new governing equations in the new variables. Next we rewrite the governing equations in skew-symmetric form and show how to get an energy  \textcolor{blue}{and entropy} bound.  Section~\ref{numerics} illustrate the relation between the new continuous skew-symmetric formulation and stability of the numerical \textcolor{green}{SBP based} semi-discrete scheme. A summary and conclusions are provided in Section~\ref{sec:conclusion}.



\section{The main theory}\label{sec:theory}

The general theory extending the energy method to the nonlinear case in  \cite{nordstrom2022linear-nonlinear} is shortly summarised here. Consider the following general hyperbolic IBVP
\begin{equation}\label{eq:nonlin}
P U_t + (A_i(V) U)_{x_i}+B_i(V)U_{x_i}+C(V)U=0,  \quad t \geq 0,  \quad  \vec x=(x_1,x_2,..,x_k) \in \Omega
\end{equation}
augmented with homogeneous boundary conditions $L_p U=0$ at the boundary $\partial \Omega$. In (\ref{eq:nonlin}),  the Einstein summation convention is used and $P$ is a symmetric positive definite (or semi-definite) time-independent matrix that defines an energy norm (or semi-norm) $\|U\|^2_P= \int_{\Omega} U^T P U d\Omega$. We assume that $U$ and $V$ are smooth. The $n \times n$ matrices $A_i,B_i,C$ are smooth functions (each matrix element is smooth) of the $n$ component vector $V$, but otherwise arbitrary. Note that (\ref{eq:nonlin}) encapsulates both linear ($V \neq U$) and nonlinear  ($V=U$) problems. 

\textcolor{green}{\subsection{Energy analysis}\label{sec:energy}}
The following two concepts are essential for a proper treatment of (\ref{eq:nonlin}).
\begin{definition}
The problem (\ref{eq:nonlin}) is energy conserving if $\|U\|^2_P= \int_{\Omega} U^T P U d\Omega$ only changes due to boundary effects. It is energy bounded if $\|U\|^2_P < \infty$ as $t  \rightarrow  \infty$.
\end{definition}
\begin{proposition}\label{lemma:Matrixrelation}
The IBVP  (\ref{eq:nonlin}) for linear ($V \neq U$) and nonlinear  ($V=U$)  is energy conserving if
\begin{equation}\label{eq:boundcond}
B_i= A_i^T, \quad i=1,2,..,k \quad \text{and } \quad C+C^T = 0
\end{equation}
holds. It is energy bounded if it is energy conserving and the boundary conditions $L_p U=0$ are such that 
\begin{equation}\label{1Dprimalstab}
\oint\limits_{\partial\Omega}U^T  (n_i A_i)   \\\ U \\\ ds = \oint\limits_{\partial\Omega} \frac{1}{2} U^T ((n_i A_i)  +(n_i A_i )^T) U \\\ ds \geq 0,
\end{equation}
\end{proposition}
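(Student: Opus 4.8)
The plan is to apply the energy method directly to (\ref{eq:nonlin}): multiply from the left by $U^T$, integrate over $\Omega$, and track the time derivative of $\|U\|_P^2$. First I would differentiate the norm in time. Since $P$ is symmetric and time-independent, $\frac{d}{dt}\|U\|_P^2 = \int_\Omega (U_t^T P U + U^T P U_t)\, d\Omega = 2\int_\Omega U^T P U_t\, d\Omega$, where I use that each integrand is a scalar and hence equals its own transpose. Substituting $P U_t = -(A_i U)_{x_i} - B_i U_{x_i} - C U$ from (\ref{eq:nonlin}) then expresses the energy rate as a sum of three volume integrals, which I would treat one at a time.

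The zeroth-order term is immediate: because $U^T C U$ is a scalar, $U^T C U = \frac{1}{2} U^T (C + C^T) U$, which vanishes under the assumption $C + C^T = 0$ in (\ref{eq:boundcond}). The crucial step is the interplay between the conservative term $(A_i U)_{x_i}$ and the non-conservative term $B_i U_{x_i}$. I would integrate the conservative contribution by parts via the product rule $(U^T A_i U)_{x_i} = (U_{x_i})^T (A_i U) + U^T (A_i U)_{x_i}$ together with the divergence theorem, producing a boundary integral $\oint_{\partial\Omega} n_i U^T A_i U\, ds$ and an interior remainder $-\int_\Omega (U_{x_i})^T A_i U\, d\Omega$. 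The point I would stress is that this integration by parts keeps the full flux derivative $(A_i U)_{x_i}$ intact and never differentiates the matrix $A_i(V)$ on its own; no chain rule and no sign conditions on $V$ are needed, which is exactly what distinguishes this argument from the entropy approach.

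Next I would use $B_i = A_i^T$ from (\ref{eq:boundcond}). Since $U^T B_i U_{x_i} = U^T A_i^T U_{x_i}$ is a scalar, it equals its transpose $(U_{x_i})^T A_i U$, so the non-conservative volume term contributes $-2\int_\Omega (U_{x_i})^T A_i U\, d\Omega$, which exactly cancels the $+2\int_\Omega (U_{x_i})^T A_i U\, d\Omega$ remainder left over from the conservative term. Only the boundary integral survives, giving $\frac{d}{dt}\|U\|_P^2 = -2\oint_{\partial\Omega} n_i U^T A_i U\, ds$. Rewriting $U^T(n_i A_i)U = \frac{1}{2}U^T((n_i A_i)+(n_i A_i)^T)U$, again because a scalar equals its transpose, recovers the symmetric boundary form of (\ref{1Dprimalstab}) and shows the energy changes only through boundary data, i.e. the problem is energy conserving. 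The argument is insensitive to whether $V=U$ or $V\neq U$, so it settles the linear and nonlinear cases simultaneously.

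Finally, for the energy bound I would invoke the boundary conditions $L_p U = 0$ precisely through requirement (\ref{1Dprimalstab}) that the surface integral be non-negative. Then $\frac{d}{dt}\|U\|_P^2 \le 0$, so $\|U\|_P^2$ is non-increasing in time and bounded above by its finite initial value; in particular $\|U\|_P^2 < \infty$ as $t\to\infty$. The only genuinely delicate part of the whole argument is the bookkeeping in the cancellation step, keeping careful track of the transposes, the factor $2$, and the sign of the boundary term, together with the observation that the conservative derivative must be left undifferentiated for the method to carry over to the nonlinear setting.
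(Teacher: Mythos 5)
Your proof is correct and takes essentially the same approach as the paper: the energy method (multiply by $U^T$, integrate over $\Omega$), integration by parts on the conservative term, cancellation of the volume terms via $B_i = A_i^T$ and $C+C^T=0$, and the sign condition (\ref{1Dprimalstab}) to conclude boundedness. The paper simply compresses the bookkeeping you spell out into the single identity (\ref{eq:boundaryPart1}); your version is the same argument written out in full.
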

\begin{proof}
The energy method applied to (\ref{eq:nonlin}) yields
\begin{equation}\label{eq:boundaryPart1}
\frac{1}{2} \frac{d}{dt}\|U\|^2_P + \oint\limits_{\partial\Omega}U^T  (n_i A_i)  \\\ U \\\ ds= \int\limits_{\Omega}(U_{x_i}^T  A_i U - U^T B_i U_{x_i}) \\\ d \Omega -\int\limits_{\Omega} U^T  C U \\\ d \Omega,
\end{equation}
where $(n_1,..,n_k)^T$ is the outward pointing unit normal. The terms on the right-hand side of (\ref{eq:boundaryPart1}) are cancelled by (\ref{eq:boundcond}) leading to energy conservation. If in addition (\ref{1Dprimalstab}) holds, an energy bound is obtained. 
\end{proof}
\begin{remark}\label{list}
Proposition \ref{lemma:Matrixrelation}  shows that whatever form the original IBVP has, energy boundedness and energy conservation can be proved if it can be rewritten in the form given by (\ref{eq:nonlin})-(\ref{eq:boundcond}).
The procedure to arrive at an energy estimate involve the following steps.
\begin{enumerate}
\item Find an appropriate energy norm from which one can choose new dependent variables.
\item Derive a new set of governing equations in the new variables from the standard governing equations.
\item Transform the new set of equations into a skew-symmetric formulation as in (\ref{eq:nonlin}).
\item Apply the energy method such that only boundary terms remain as in (\ref{eq:boundaryPart1}).
\item Find a minimal number of boundary conditions that limits the resulting boundary terms as in (\ref{1Dprimalstab}).
\end{enumerate}
\end{remark}
\begin{remark}\label{necessary}
\textcolor{blue}{For linear problems, a minimal number of boundary conditions that lead to a bound is a necessary and sufficient condition for well-posedness. For nonlinear problems this is not the case. A minimal number of boundary conditions that lead to a bound is a necessary but not a sufficient condition \cite{nordstrom2005,nordstrom2020,nordstrom2021linear}.}
\end{remark}
\textcolor{green}{We will in Section \ref{sec:skew-eq} show a detailed derivation of steps 1-3 for the compressible Euler equations. Steps 4-5 will be shortly reviewed. Before that, we will show that also a specific mathematical entropy is conserved.}

\textcolor{blue}{\subsection{Entropy analysis}}\label{sec:entropy}
\textcolor{blue}{For non-smooth nonlinear solutions $U$, the formulation (\ref{eq:nonlin}) interpreted in a weak sense also allows for an entropy conservation law.
\begin{proposition}\label{lemma:Entropy-relation}
The IBVP  (\ref{eq:nonlin}) together with the conditions (\ref{eq:boundcond}) leads to the entropy conservation law
\begin{equation}\label{eq:entropy-equation}
S_t + (F_i)_{x_i} = 0,
\end{equation}
where $S=U^T P U/2$ is the mathematical (or generalised) entropy  and  $F_i=U^T A_i U$ are the entropy fluxes.
\end{proposition}
\begin{proof}
Multiplication of  (\ref{eq:nonlin}) from the left with $U^T$ yields
\begin{equation}\label{eq:entropy-der}
 (U^T P U/2)_t  +  (U^T A_i U)_{x_i}  = (U_{x_i}^T  A_i U - U^T B_i U_{x_i}) - U^T  C U.
\end{equation}
The right-hand side of (\ref{eq:entropy-der}) is cancelled by (\ref{eq:boundcond}) leading to the entropy conservation relation (\ref{eq:entropy-equation}).
\end{proof}
\begin{remark}\label{entropy-comments}
The entropy conservation law (\ref{eq:entropy-equation}) holds for smooth solutions. For discontinuous solutions it holds in a distributional sense. The non-standard compatibility conditions in this case reads
\begin{equation}\label{eq:entropy-der}
\partial S/\partial U=S_{U}=U^T P,  \quad S_{U} P^{-1}( (A_i(V) U)_{x_i}+B_i(V)U_{x_i}+C(V)U)=(U^T A_i U)_{x_i}.
\end{equation}
The entropy $S$ is convex ($S_{UU}=P$) and identical to the energy. A similar identity between the energy and entropy was found in \cite{nordstrom2021linear} for the SWEs. In the following we will use energy to denote both quantities, but sometimes remind the reader by writing out both notations explicitly.
\end{remark}}

\section{The new skew-symmetric form of the compressible Euler equations}\label{sec:skew-eq}
We will go through the list in Remark \ref{list} and start with the choice of norm and new dependent variables.
\subsection{Task 1: Find an appropriate norm and new dependent variables}\label{task_1}
The total energy in a two-dimensional compressible ideal gas is a combination of internal energy $p/(\gamma -1)$, kinetic energy $\rho(u^2+v^2)/2$  and potential energy $\rho g h$. We have $E= p/(\gamma -1) + \rho(u^2+v^2)/2 +  \rho g h$,
where  $p$ is the pressure, $\gamma$ is the ratio of specific heats, $\rho$ is the density, $u,v$ are the velocities in the $(x,y)$ direction respectively and $g h$ is the gravity times the height. Note that $g h$ has the dimension ${velocity}^2$. Based on this observation we choose the dependent variables and the diagonal norm matrix (inspired by E) to be:
\begin{equation}\label{new_var}
\Phi=(\phi_1, \phi_2, \phi_3, \phi_4)^T=(\sqrt{\rho} \textcolor{green}{w}, \sqrt{\rho} u, \sqrt{\rho} v, \sqrt{p})^T, \quad P=diag(\alpha^2, \beta^2, \theta^2, 1), 
\end{equation}
where $\alpha^2, \beta^2, \theta^2$ are positive and non-dimensional but yet unknown. \textcolor{green}{We have also introduced the arbitrary and constant velocity $w$.} The total energy $E$ connects to the quadratic form $\Phi^T P \Phi$ on which we base the norm $\|\Phi\|^2_P$ by observing that all terms involved have dimension  $density \times {velocity}^2$. \textcolor{green}{In the following, we set w=1 without restriction}. This completes the first task in Remark \ref{list}.


\subsection{Task 2: Rewriting the compressible Euler equations in new dependent variables}\label{task_2}

The compressible Euler equations in primitive form using the pressure and the ideal gas law is 
\begin{equation}\label{eq:primitive}
\begin{aligned}
    \rho_t + u \rho_x + v \rho_y +  \rho (u_x+v_y) &=0,\\
    u_t + u u_x + v u_y + p_x/\rho                        &=0,\\
    v_t + u v_x + v v_y + p_y/\rho                         &=0,\\
    p_t + u p_x +v p_y + \gamma p (u_x +v_y)    &=0.\\
\end{aligned}
\end{equation}
Repeated use of the chain rule and the introduction of the new variables $\Phi$ in (\ref{new_var}) leads to the new equations
\begin{equation}\label{eq:skew-symmetric}
\begin{aligned}
    (\phi_1)_t + \frac{u}{2} (\phi_1)_x + \frac{1}{2} (\phi_2)_x + \frac{v}{2} (\phi_1)_y + \frac{1}{2} (\phi_3)_y&=0,\\
    (\phi_2)_t - \frac{u^2}{2} (\phi_1)_x + \frac{3u}{2} (\phi_2)_x + 2 \frac{\phi_4}{\phi_1} (\phi_4)_x - \frac{uv}{2} (\phi_1)_y+ v (\phi_2)_y+ \frac{u}{2} (\phi_3)_y&=0,\\
    (\phi_3)_t - \frac{vu}{2} (\phi_1)_x + \frac{v}{2} (\phi_2)_x + u (\phi_3)_x -\frac{v^2}{2} (\phi_1)_y+ \frac{3v}{2} (\phi_3)_y+2 \frac{\phi_4}{\phi_1} (\phi_4)_y &=0,\\
     (\phi_4)_t - \frac{\gamma}{2} u  \frac{\phi_4}{\phi_1}  (\phi_1)_x + \frac{\gamma}{2} \frac{\phi_4}{\phi_1}  (\phi_2)_x + u (\phi_4)_x  - \frac{\gamma}{2} v  \frac{\phi_4}{\phi_1}  (\phi_1)_y + \frac{\gamma}{2} \frac{\phi_4}{\phi_1}  (\phi_3)_y + v (\phi_4)_y&=0.
\end{aligned}
\end{equation}
For convenience we have used the relations $u=\phi_2/\phi_1, v=\phi_3/\phi_1$. A matrix-vector form of (\ref{eq:skew-symmetric}) is
\begin{equation}\label{eq:skew-symmetric-matrix-vector}
\Phi_t + A \Phi_x + B \Phi_y=0,
\end{equation}
where 
\begin{equation}\label{eq:new_matrix_A,B}
 A =  \frac{1}{2}\begin{bmatrix}
      u                                     & 1     & 0    & 0 \\
     - u^2                                  & 3 u   & 0  &  4 \frac{\phi_4}{\phi_1}\\
     - uv                                   &  v   & 2u & 0                                  \\
     - \gamma u  \frac{\phi_4}{\phi_1}  & \gamma \frac{\phi_4}{\phi_1}     & 0 & 2u
       \end{bmatrix}, \quad 
 B = \frac{1}{2}\begin{bmatrix}
      v                                    & 0      & 1 & 0 \\
       - uv                                   & 2v  & u & 0                                  \\
     - v^2                                  & 3 v   & 0  &  4 \frac{\phi_4}{\phi_1} \\
         - \gamma v  \frac{\phi_4}{\phi_1}  & 0    &  \gamma \frac{\phi_4}{\phi_1}  & 2v
       \end{bmatrix}.
\end{equation}       
This completes the second task in Remark \ref{list}.

\subsection{Task 3: Transforming the new set equations into a skew-symmetric form}\label{task_3}
Proposition \ref{lemma:Matrixrelation} implies that we can proceed each direction separately. By taking into account that we aim for an energy estimate using the (yet unknown) matrix $P$ in (\ref{new_var}), we need to solve
\begin{equation}\label{eq:x-direction_ODE}
(\tilde A  \Phi)_x + \tilde A^T  \Phi_x = 2 P A  \Phi_x,  \quad (\tilde B  \Phi)_y + \tilde B^T  \Phi_y = 2 P B  \Phi_y.
\end{equation}
In (\ref{eq:x-direction_ODE}) we have two systems of ordinary differential equations for the 32 entries $a_{ij}, b_{ij},$ in the matrices $\tilde A, \tilde B$ and the 3 unknowns on the diagonal in $P$.  The number of equations is 8 and we have 4 variables which leads to 32 relations. A simple counting argument implies that the problem is likely solvable. Another important observation is that both the matrices $\tilde A, \tilde B$ and the norm $P$ are coupled and solved for together. To exemplify the procedure we consider the  $x$-direction and later directly provide the results for the $y$-direction. 

We start with row 1 in (\ref{eq:x-direction_ODE}) for the  $x$-direction. The following equation holds
\begin{equation}\label{ex:row_1}
\begin{aligned}
   &(2 a_{11}) (\phi_1)_x + (a_{12}+a_{21})(\phi_2)_x+(a_{13}+a_{31})(\phi_3)_x+(a_{14}+a_{41})(\phi_4)_x\\
+&(a_{11})_x  \phi_1+(a_{12})_x  \phi_2+(a_{13})_x  \phi_3+(a_{14})_x  \phi_4=\alpha^2 u  (\phi_1)_x + \alpha^2 (\phi_2)_x.
\end{aligned}
\end{equation}
There are no entries involving $\phi_3, \phi_4$ on the righthand side (RHS) of the equation. Hence we make the ansatz $a_{13}= \tilde a_{13} \phi_3, a_{31}= \tilde a_{31} \phi_3, a_{14}= \tilde a_{14} \phi_4, a_{41}= \tilde a_{41} \phi_4$. This ansatz  cancels all terms related to $\phi_3, \phi_4$ if $\tilde a_{31}= -2 \tilde a_{13}$ and $\tilde a_{41}= -2 \tilde a_{14}$ with  $\tilde a_{13}, \tilde a_{14}$ as arbitrary constants. The remaining terms on the RHS are
\begin{equation}\label{ex:row_1_RHS}
\alpha^2 u  (\phi_1)_x + \alpha^2 (\phi_2)_x= \alpha^2 ((1-\eta) u (\phi_1)_x + (1+\eta)  (\phi_2)_x - \eta u_x \phi_1),
\end{equation}
with $\eta$ as a free parameter. Equating the terms in (\ref{ex:row_1}) using (\ref{ex:row_1_RHS}) yields $\eta=-1, a_{11}=\alpha^2 u$ \textcolor{blue}{leaving} no terms on the RHS in  (\ref{ex:row_1_RHS}). Hence we get the same type of solution as for $\phi_3, \phi_4$, i.e. $\tilde a_{21}= -2 \tilde a_{12}$. This provide the matrix $\tilde A$ with a determined first row and column as
\begin{equation}\label{eq:new_matrix_tildeA_1}
  \tilde A = \begin{bmatrix}
      \alpha^2 u                                    &  \tilde a_{12} \phi_2 &  \tilde a_{13} \phi_3  & \tilde a_{14} \phi_4 \\
       -2 \tilde a_{12}  \phi_2                          &   \cdot            &   \cdot                       &   \cdot                  \\
       -2 \tilde a_{13}  \phi_3                          &  \cdot             &   \cdot                       &   \cdot                  \\
       -2 \tilde a_{14}  \phi_4                          &  \cdot             &   \cdot                       &   \cdot                  
                   \end{bmatrix}.
 \end{equation}
 
 Next we consider row 2 in (\ref{eq:x-direction_ODE}). The following equation holds
 \begin{equation}\label{ex:row_2}
\begin{aligned}
   &(a_{21}+a_{12})(\phi_1)_x + (2 a_{22})(\phi_2)_x+(a_{23}+a_{32})(\phi_3)_x+(a_{24}+a_{42})(\phi_4)_x\\
+&(a_{21})_x  \phi_1+(a_{22})_x  \phi_2+(a_{23})_x  \phi_3+(a_{24})_x  \phi_4=-\beta^2 u^2  (\phi_1)_x + 3\beta^2 u (\phi_2)_x + 4\beta^2  \frac{\phi_4}{\phi_1} (\phi_4)_x,
\end{aligned}
\end{equation}
where $a_{12}$ and  $a_{21}$ are already determined. Since there are no entries involving $\phi_3$ on the RHS of the equation, we find (as for row 1) that $a_{23}= \tilde a_{23 }\phi_3$ and $a_{32}= -2\tilde a_{32}\phi_3$ is a solution. By inspecting the terms related to $\phi_4$ we see that $a_{24}=0$ and $a_{42}= 4\beta^2  \frac{\phi_4}{\phi_1}$ is the only possible solution. The remaining terms on the RHS of (\ref{ex:row_2}) are rewritten as
\begin{equation}\label{ex:row_2_RHS}
-\beta^2 u^2  (\phi_1)_x + 3\beta^2 u (\phi_2)_x = \beta^2 (3 \eta -1) u^2 (\phi_1)_x + 3 (1- \eta) u (\phi_2)_x + 3 u u_x \phi_1),
\end{equation}
with $\eta$ as a free parameter. By inserting the known values of $a_{12}$ and  $a_{21}$ as well as making the ansatz  $a_{22} = (\tilde a_{22} \phi_1+ \psi_2 u)$ based on the RHS in (\ref{ex:row_2_RHS}) with $\tilde a_{22}, \psi_2$ as free parameters we find the relation
\begin{equation}\label{ex:row_2_RHS_extra}
((\phi_1)_x \phi_2 + 2 (\phi_2)_x \phi_1))  (\tilde a_{22} - \tilde a_{12}) + 2 \psi_2 u (\phi_2)_x + \psi u_x \phi_2  = \beta^2 (3 \eta -1) u^2 (\phi_1)_x + 3 (1- \eta) u (\phi_2)_x + 3 u u_x \phi_1),
\end{equation}
with the solution $\eta = 1/3, \psi_2  = \beta^2, \tilde a_{22} = \tilde a_{12}$. We now have determined also the second row and column
\begin{equation}\label{eq:new_matrix_tildeA_2}
  \tilde A = \begin{bmatrix}
      \alpha^2 u                                    &  \tilde a_{12} \phi_2 &  \tilde a_{13} \phi_3  & \tilde a_{14} \phi_4 \\
       -2 \tilde a_{12}  \phi_2                          &  (\tilde a_{12} \phi_1 +  \beta^2 u)         &   \tilde a_{23} \phi_3                      &   0                \\
       -2 \tilde a_{13}  \phi_3                          &   -2 \tilde a_{23}  \phi_3          &   \cdot                       &   \cdot                  \\
       -2 \tilde a_{14}  \phi_4                          &  4\beta^2  \frac{\phi_4}{\phi_1}            &   \cdot                       &   \cdot                  
                   \end{bmatrix}.
 \end{equation}
The procedure is now clear. One proceeds row by row with a decreasing number of new entries to determine. 

In the third step one makes the ansatz $a_{33} = (\tilde a_{33} \phi_1+ \psi_3 u)$ (for the same reason as in step 2) and finds that $\tilde a_{13} = \tilde a_{23} =\tilde a_{33} =0$ and $\psi_3 = \theta^2$ must hold. The resulting matrix after the third step is
\begin{equation}\label{eq:new_matrix_tildeA_3}
  \tilde A = \begin{bmatrix}
      \alpha^2 u                                    &  \tilde a_{12} \phi_2 &  0  & \tilde a_{14} \phi_4 \\
       -2 \tilde a_{12}  \phi_2                          &  (\tilde a_{12} \phi_1 +  \beta^2 u)         &   0                      &   0                \\
       0                         &   0          &  \theta^2 u                     &   \tilde a_{34} \phi_4              \\
       -2 \tilde a_{14}  \phi_4                          &  4\beta^2  \frac{\phi_4}{\phi_1}            &   -2 \tilde a_{34}  \phi_4                        &   \cdot                
                   \end{bmatrix}.
 \end{equation}
 
 In the fourth step most of the matrix is already determined, and hence we directly state the final equation which after the ansatz $a_{44} = (\tilde a_{44} \phi_1+ \psi_4 u)$ and realising that $\tilde a_{34}=0$ must hold becomes            
\begin{equation}\label{ex:row_4_RHS}
((\phi_1)_x \phi_4 + 2 (\phi_4)_x \phi_1))  (\tilde a_{44} - \tilde a_{14}) + (2 \psi_4 +4 \beta^2) u (\phi_4)_x + (\psi_4 + 4 \beta^2) u_x \phi_4 =  \gamma u_x \phi_4 + 2 u (\phi_4)_x.
\end{equation}
The solution is given by $\beta^2=(\gamma -1)/2, \psi_4= \gamma-2, \tilde a_{44} = \tilde a_{14}$. Note that this gives the first information about the norm matrix $P$ via the requirement for $\beta^2$. The final matrix $\tilde A$  and related norm $P$ becomes
\begin{equation}\label{eq:new_matrix_tildeA_4}
  \tilde A = \begin{bmatrix}
      \alpha^2 u                                    &  \tilde a_{12} \phi_2 &  0 & \tilde a_{14} \phi_4 \\
       -2 \tilde a_{12}  \phi_2                          &  (\tilde a_{12} \phi_1 + \frac{(\gamma -1)}{2}   u)         &  0                   &   0                \\
       0                        &   0        &  \frac{(\gamma -1)}{2}  u                     &  0             \\
       -2 \tilde a_{14}  \phi_4                          &  2 (\gamma -1)  \frac{\phi_4}{\phi_1}            &     0                   & (\tilde a_{14} \phi_1 + (2-\gamma)u)            
                   \end{bmatrix}, \,\
P=\begin{bmatrix}
      \alpha^2    &   0                                  &  0                                 & 0  \\
      0                   &  \frac{(\gamma -1)}{2}   &  0                                 &  0                \\
      0                   &   0                                 &  \frac{(\gamma -1)}{2}  &  0             \\
      0.                  &   0                                 &  0                                 &  1           
                   \end{bmatrix}.
 \end{equation}
To be precise, the third element in $P$ is obtained in the derivation of the matrix $\tilde B$ given below 
\begin{equation}\label{eq:new_matrix_tildeB}
  \tilde B = \begin{bmatrix}
      \alpha^2 v                                    &  0&  \tilde b_{13} \phi_3  & \tilde b_{14} \phi_4 \\
      0                          &  \frac{(\gamma -1)}{2}   v         &   0                     &   0                \\
       -2 \tilde b_{13}  \phi_3                          &  0          &     (\tilde b_{13} \phi_1 + \frac{(\gamma -1)}{2}   v)                &  0             \\
       -2 \tilde b_{14}  \phi_4                          &  0       &    2 (\gamma -1)  \frac{\phi_4}{\phi_1}                    & (\tilde b_{14} \phi_1 + (2-\gamma)v)            
                   \end{bmatrix}.
 \end{equation}
This completes the third task in Remark \ref{list}.

\subsection{Task 4: Applying the energy method such that only boundary terms remain}\label{task_4}

We multiply  (\ref{eq:skew-symmetric-matrix-vector}) with  $2 \Phi^T P$  from the left, use (\ref{eq:x-direction_ODE}) and integrate over the domain $\Omega$. By using Greens formula and Proposition \ref{lemma:Matrixrelation} we find
\begin{equation}\label{eq:boundaryPart_EULER}
\frac{d}{dt}\|\Phi\|^2_P + \oint\limits_{\partial\Omega}^T  \Phi^T (n_x \tilde A + n_y \tilde B) \Phi ds=0,
\end{equation}
where $(n_x,n_y )^T$ is the outward pointing unit normal from the boundary $\partial\Omega$. The relation (\ref{eq:boundaryPart_EULER}) shows that energy  \textcolor{blue}{(and entropy)} is conserved in the sense that it only changes due to boundary effects. 

The matrices $\tilde A, \tilde B$ and the norm matrix $P$ contain 5 undetermined parameters $\tilde a_{12}, \tilde a_{14}, \tilde b_{13},\tilde b_{14}$ and  $\alpha^2$.
Except for the parameter $\alpha^2$, which is part of the norm, the energy rate cannot depend on these parameters since they are not present in (\ref{eq:skew-symmetric-matrix-vector}), (\ref{eq:new_matrix_A,B}) and (\ref{eq:x-direction_ODE}).  Hence as a sanity check we compute the boundary contraction involving only the terms multiplied by $\tilde a_{12}, \tilde a_{14}, \tilde b_{13},\tilde b_{14}$ and find
\begin{equation}\label{boundarmatrix_free_contraction}
\Phi^T \begin{bmatrix}
       0                    &  n_x\tilde a_{12} \phi_2   &  n_y \tilde b_{13} \phi_3     & (n_x \tilde a_{14} + n_y \tilde b_{14}) \phi_4                           \\
       -2 n_x \tilde a_{12}  \phi_2     &  n_x \tilde a_{12} \phi_1         &  0                                          &   0                                                     \\
       -2 n_y \tilde b_{13}  \phi_3    &   0                                                                                     &  n_y \tilde b_{13} \phi_1 &  0                                                      \\
       -2 (n_x \tilde a_{14} + n_y \tilde b_{14})  \phi_4     &  0  & 0 & (n_x \tilde a_{14} + n_y \tilde b_{14})\phi_1        
                   \end{bmatrix} \Phi=0,
\end{equation}
showing that the free parameters do not influence the energy rate.
The remaining boundary contraction is
\begin{equation}\label{boundarmatrix_contraction}
\Phi^T (n_x \tilde A + n_y \tilde B) \Phi = 
\Phi^T  \begin{bmatrix}
      \alpha^2 u_n  &  0 &  0 & 0                       \\
       0                    &  \frac{(\gamma -1)}{2}   u_n         &  0                                          &      n_x (\gamma -1)  \frac{\phi_4}{\phi_1}                                                 \\
       0                    &   0                                                                                     &  \frac{(\gamma -1)}{2}   u_n   &   n_y (\gamma -1)  \frac{\phi_4}{\phi_1}                                                      \\
       0                    &  n_x (\gamma -1)  \frac{\phi_4}{\phi_1}& n_y (\gamma -1)  \frac{\phi_4}{\phi_1} & (2-\gamma)u_n            
                   \end{bmatrix} \Phi,
\end{equation}
where we  introduced the normal velocity $u_n=n_xu+n_yv$. This completes the fourth task in Remark \ref{list}.

\subsection{Task 5: The choice of nonlinear boundary conditions }\label{task_5}
A nonlinear and linear analysis may lead to a different number and type of boundary conditions required for an energy bound. This was discussed extensively in \cite{nordstrom2021linear}, \cite{nordstrom2022linear-nonlinear} where the boundary matrix was found to be different in the linear and nonlinear case, and also to have a different meaning. For completeness we will repeat part of that discussion here. For more details we refer the reader to \cite{nordstrom2021linear}, \textcolor{blue}{and} \cite{nordstrom2022linear-nonlinear}.

We start by rotating the velocities to be normal ($u_n=n_xu+n_yv$) or aligned ($u_{\tau} =-n_yu+n_xv$) with the boundary. By inserting these transformation into (\ref{boundarmatrix_contraction}) we obtain the rotated boundary contraction 
\begin{equation}\label{boundarmatrix_contraction_rotated}
\Phi^T (n_x \tilde A + n_y \tilde B) \Phi = 
\Phi^T_r  \begin{bmatrix}
      \alpha^2 u_n  &  0 &  0 & 0                       \\
       0                    &  \frac{(\gamma -1)}{2}   u_n         &  0                                          &      (\gamma -1)  \frac{\phi_4}{\phi_1}                                                 \\
       0                    &   0                       &  \frac{(\gamma -1)}{2}   u_n                                                                 &  0                                                      \\
       0                    &  (\gamma -1)  \frac{\phi_4}{\phi_1}& 0 & (2-\gamma)u_n            
                   \end{bmatrix} \Phi_r,
\end{equation}
where the rotated solution is $\Phi_r=(\phi_1, \phi_1 u_n, \phi_1 u_{\tau},  \phi_4)^T$. By considering the eigenvalues of the matrix we see that they indicate a similar but not identical sign pattern as in the linear case. We find the eigenvalues
\begin{equation}\label{eigenvalues_rotated}
\lambda_1= \alpha^2 u_n,  \quad \lambda_2=  \frac{(\gamma -1)}{2} u_n, \quad  \lambda_{3,4}=\frac{(3-\gamma)}{4} u_n   \pm  \sqrt{ \left(\frac{(3-\gamma)}{4} u_n\right)^2-a^2 c^2((M_n^2-b^2)},
\end{equation}
where $a^2=\frac{(\gamma-1)(2-\gamma)}{2}, b^2 = \frac{2(\gamma-1)}{\gamma(2-\gamma)}$, $c$ is the speed of sound and $M_n = u_n/c$, the normal Mach number.

The relations (\ref{eigenvalues_rotated}) indicate that for outflow ($u_n > 0$) we have two situations. When $M_n > b$ there are 4 positive eigenvalues and no boundary condition is required. For $M_n < b$, 3 eigenvalues are positive, 1 is negative and 1 boundary condition seem to be required. For inflow ($u_n < 0$) we have the reversed situation with 4 negative eigenvalues and four required boundary conditions for $M_n > b$, which goes down to 3 negative eigenvalues and 3 required boundary conditions for $M_n < b$. 
\begin{remark}
The shift from subsonic to supersonic flow at $M_n =1$ is generally assumed to be crucial and to modify the number of boundary conditions in a linear analysis. Interestingly, here in the nonlinear analysis the shift occur when $b \approx 0.976$ if $\gamma=1.4$, Maybe even more interesting is that $b \equiv 1$ for $\gamma = \sqrt{2}  \approx 1.414$.
\end{remark}

However, considering eigenvalues is not sufficient for nonlinear problems  \cite{nordstrom2021linear},\cite{nordstrom2022linear-nonlinear}. Expanding (\ref{boundarmatrix_contraction_rotated}) give
\begin{equation}\label{boundarmatrix_contraction_rotated_expanded}
\Phi^T_r  \begin{bmatrix}
      \alpha^2 u_n  &  0 &  0 & 0                       \\
       0                    &  \frac{(\gamma -1)}{2}   u_n         &  0                                          &      (\gamma -1)  \frac{\phi_4}{\phi_1}                                                 \\
       0                    &   0                       &  \frac{(\gamma -1)}{2}   u_n                                                                 &  0                                                      \\
       0                    &  (\gamma -1)  \frac{\phi_4}{\phi_1}& 0 & (2-\gamma)u_n            
                   \end{bmatrix} \Phi_r=u_n \left( \alpha^2 \phi_1^2+\frac{(\gamma -1)}{2}(\phi_2^2+\phi_3^2) + \gamma \phi_4^2\right),
\end{equation}
which proves that no boundary conditions are necessary in the outflow case. It also proves that specifying the normal velocity to zero is correct at a solid wall,  \textcolor{blue}{see \cite{svard2014,parsani2015entropy,svar2018,svard2021,chan2022} for some previous results on this matter.}
This completes the fifth and final task in Remark \ref{list}.


\subsection{The final form of the governing equations}\label{final_eq}
The derivations above focused on stability and resulted in matrices $\tilde A, \tilde B$ that were functions of the constant norm matrix $P$ as seen in (\ref{eq:x-direction_ODE}). 
The final governing equations can be transformed to
\begin{equation}\label{eq:stab_eq}
\Phi_t + (A_1  \Phi)_x + A_2  \Phi_x + (B_1  \Phi)_y + B_2  \Phi_y=0,
\end{equation}
where 
\begin{equation}\label{eq:stab_eq_matrixdef}
A_1=P^{-1} \tilde A/2, \quad A_2= P^{-1} \tilde A^T/2,  \quad B_1= P^{-1} \tilde B/2, \quad  B_2 = P^{-1} \tilde B^T/2. 
\end{equation}
This removes the dependence of the norm $P$ in the matrices which take the form (without  free parameters)
\begin{equation}\label{eq:new_matrix_finalAs}
A_1 = \frac{1}{2}\begin{bmatrix}
      u                                    &  0  &  0 & 0 \\
      0                                    &  u  &  0 & 0                \\
      0  &   0                                                      &  u &  0             \\
      0  &  2 (\gamma -1)  \frac{\phi_4}{\phi_1} & 0  &  (2-\gamma)u           
                   \end{bmatrix}, \quad
A_2 = \frac{1}{2}\begin{bmatrix}
      u                                    &  0  &  0 & 0 \\
      0                                    &  u  &  0 & 4  \frac{\phi_4}{\phi_1}                \\
      0  &   0                                                      &  u &  0             \\
      0  &   0 & 0  &  (2-\gamma)u           
                   \end{bmatrix},                    
 \end{equation}
 \begin{equation}\label{eq:new_matrix_finalBs}
B_1 = \frac{1}{2}\begin{bmatrix}
      v                                    &  0  &  0 & 0 \\
      0                                    &  v  &  0 & 0                \\
      0  &   0                                                      &  v &  0             \\
      0  &  0 & 2 (\gamma -1)  \frac{\phi_4}{\phi_1}   &  (2-\gamma)v          
                   \end{bmatrix}, \quad
B_2 = \frac{1}{2}\begin{bmatrix}
      v                                   &  0  &  0 & 0 \\
      0                                    &  v  &  0 & 0               \\
      0  &   0                                                      &  v &  4  \frac{\phi_4}{\phi_1}           \\
      0  &   0 & 0  &  (2-\gamma)v        
                   \end{bmatrix}.                   
 \end{equation}
 
\subsection{Some open questions}\label{open_questions}
It is interesting to consider the energy \textcolor{blue}{(and entropy)} rate. By combining  (\ref{eq:boundaryPart_EULER}) and (\ref{boundarmatrix_contraction_rotated_expanded})
we find
\begin{equation}\label{eq:boundaryPart_EULER_rate}
\frac{d}{dt} \int\limits_{\Omega}    \Phi^T P \Phi  dxdy + \oint\limits_{\partial\Omega}  u_n( \Phi^T P \Phi  + (\gamma -1) p) ds=0.
\end{equation}
This means the rate of change in the domain $\Omega$ of the energy ($ \Phi^T P \Phi $) is increasing or decreasing due to the transport of energy in or out of the domain plus an additional amount due to pressure work 
$(\gamma -1) u_n p$.

As we have seen the $\tilde a_{12}, \tilde a_{14}, \tilde b_{13},\tilde b_{14}$ does not contribute to the rate of change in the energy but could be part of the scheme by populating the matrices in (\ref{eq:new_matrix_finalAs}) and (\ref{eq:new_matrix_finalBs}). It is an open question whether they will modify the spectrum and hence time-integration procedure. The parameter $\alpha^2$ in the norm could be any positive number that defines a reasonable norm. It has no influence on the scheme.

There are two possible interpretations of the sign requirements for the density $\rho$ and the pressure $p$ (and hence temperature). The first interpretation considers the problem in a physical way which means that $\rho$ and $p$ must be positive, otherwise the new variables involving square roots do not exist. In the second opposite interpretation, the new variables are considered as the ones defining the original variables. With this point of view, the sign problem vanishes since by squaring $\phi_1$ and $\phi_4$ both  $\rho$ and $p$ will always be positive.  \textcolor{blue}{The bounds on the new variables directly lead to bounds on the original variables, by squaring them}.

\section{A stable energy \textcolor{blue}{and entropy} conserving numerical approximation}\label{numerics}
To exemplify the straightforward construction of stable schemes based on the new formulation, we consider a summation-by-parts (SBP) approximation of (\ref{eq:stab_eq}),(\ref{eq:stab_eq_matrixdef}) as given in
\begin{equation}\label{EUL_Disc}
\vec \Phi_t+{\bf D_x} ({\bf A_1}  \vec \Phi)+{\bf A_2} {\bf D_x}  (\vec \Phi) +{\bf D_y} ({\bf B_1}  \vec \Phi)+{\bf B_2} {\bf D_y}  (\vec \Phi)=0,
\end{equation}
where $\vec \Phi=(\vec \Phi_1^T, \vec \Phi_2^T,...,\vec \Phi_n^T)^T$ include approximations of  $\Phi=(\phi_1,\phi_2,...,\phi_n)^T$ in each node.  The matrix elements of ${\bf A_1},{\bf A_2},{\bf B_1},{\bf B_2}$ are matrices with node values of the matrix elements in $A_1,A_2,B_1,B_2$ injected on the diagonal as exemplified below
\begin{equation}
\label{illustration}
A_1 =
\begin{pmatrix}
      a_{11}   &  \ldots  & a_{1n} \\
       \vdots   & \ddots & \vdots \\
       a_{n1} &  \ldots  & a_{nn}
\end{pmatrix}, \quad
{\bf A_1} =
\begin{pmatrix}
      {\bf a_{11}}   &  \ldots  &  {\bf a_{1n}}  \\
       \vdots          & \ddots &  \vdots           \\
        {\bf a_{n1}} &  \ldots &  {\bf a_{nn}} 
\end{pmatrix}, \quad
{\bf a_{ij}} =diag(a_{ij}(x_1,y_1), \ldots, a_{ij}(x_N,y_M)).
\end{equation}

Moreover ${\bf D_x}=I_n \otimes D_{x} \otimes I_y$ and ${\bf D_y}=I_n \otimes I_x \otimes D_y$ where $D_{x,y}=P^{-1}_{x,y}Q_{x,y}$ are 1D SBP difference operators, $P_{x,y}$ are positive definite diagonal quadrature matrices, $Q_{x,y}$ satisfies the SBP constraint $Q_{x,y}+Q_{x,y}^T=B_{x,y}=diag[-1,0,...,0,1]$, $\otimes$ denotes the Kronecker product  and $I$ with subscripts denote identity matrices. All matrices have appropriate sizes such that the matrix-matrix and matrix-vector operations are defined. 
Based on the 1D SBP operators, the 2D SBP relations mimicking integration by parts are given by 
\begin{equation}\label{Multi-SBP}
\vec U^T  \tilde {\bf P} {\bf D_x} \vec V= -({\bf D_x} \vec U)^T \tilde {\bf P} \vec V + \vec U^T {\bf B_x} \vec V, \quad 
\vec U^T \tilde {\bf P} {\bf D_y} \vec V= -({\bf D_y} \vec U)^T  \tilde {\bf P} \vec V + \vec U^T {\bf B_y} \vec V,
\end{equation}
where  $\vec U^T {\bf B_x} \vec V$ and $\vec U^T {\bf B_y} \vec V$ contain numerical integration along rectangular domain boundaries. In (\ref{Multi-SBP}) we have used
$ \tilde {\bf P}=I_n  \otimes P_x   \otimes P_y$, ${\bf B_x}=(I_n \otimes B_x \otimes P_y)$ and ${\bf B_y}=(I_n \otimes P_x \otimes B_y)$. 

The discrete energy method (multiply (\ref{EUL_Disc}) from the left with  $2 \vec \Phi^T {\bf P} \tilde {\bf P}$) where ${\bf P} = P \otimes I_x   \otimes I_y$ yields 
\begin{equation}\label{SWE_Disc_energy}
2 \vec \vec \Phi^T ({\bf P} \tilde {\bf P}) \vec \Phi_t+ \vec \Phi^T  \tilde {\bf P} {\bf D_x} (2{\bf P}{\bf A_1})  \vec \Phi+ \vec \Phi^T \tilde {\bf P} (2  {\bf P}{\bf A_2}){\bf D_x}  (\vec \Phi) + 
                                                              \vec \Phi^T  \tilde {\bf P} {\bf D_y} (2 {\bf P} {\bf B_1})  \vec \Phi+ \vec \Phi^T \tilde {\bf P} (2 {\bf P} {\bf B_2}) {\bf D_Y}  (\vec \Phi) =0,
\end{equation}
where we have used that ${\bf P}$ commutes with ${\bf D_x},{\bf D_y}$. Next, the discrete relations corresponding to (\ref{eq:stab_eq_matrixdef}),
\begin{equation}\label{eq:stab_eq_matrixdef_disc}
\bf A_1={\bf P}^{-1} \tilde {\bf A}/2, \quad A_2= P^{-1} \tilde A^T/2,  \quad B_1= P^{-1} \tilde B/2, \quad  B_2 = P^{-1} \tilde B/2,
\end{equation}
and the notation  $\| \vec U\|_{{\bf P} \tilde {\bf P}}^2=\vec U^T ({\bf P} \tilde {\bf P}) \vec U$ transforms (\ref{SWE_Disc_energy}) to
\begin{equation}\label{SWE_Disc_energy_final}
\dfrac{d}{dt} \|\Phi \vec \|_{{\bf P}  \tilde {\bf P}}^2+ \vec \Phi^T  \tilde {\bf P} {\bf D_x} ( \tilde {\bf A})  \vec \Phi+ \vec \Phi^T \tilde {\bf P} ( \tilde {\bf A}^T){\bf D_x}  (\vec \Phi) + 
                                                              \vec \Phi^T  \tilde {\bf P} {\bf D_y} ( \tilde {\bf B})  \vec \Phi+ \vec \Phi^T \tilde {\bf P} ( \tilde {\bf B}^T) {\bf D_y}  (\vec \Phi) =0.
\end{equation}
Finally, the SBP relations (\ref{Multi-SBP}), $\tilde {\bf P} \tilde {\bf A}=\tilde {\bf A} \tilde {\bf P}$ and  $\tilde {\bf P} \tilde {\bf B} =\tilde {\bf B}\tilde {\bf P}$ (the matrices consist of diagonal blocks) yield
 \begin{equation}\label{Disc_energy_final}
\dfrac{d}{dt} \| \Phi \vec \|_{{\bf P}  \tilde {\bf P}}^2 + \vec \Phi^T( {\bf B_x} (\tilde {\bf A}) + {\bf B_y} (\tilde {\bf B})  \vec \Phi=0.
\end{equation}
The semi-discrete energy rate in (\ref{Disc_energy_final}) mimics the continuous result in (\ref{eq:boundaryPart_EULER}) and hence the scheme is energy  \textcolor{blue}{and entropy} conserving.  Stability can be obtained by adding a proper dissipative boundary treatment. 
\begin{remark}
It is irrelevant whether the problem is linear or nonlinear. The skew-symmetric formulation, an SBP discretisation and a proper boundary treatment \textcolor{green}{are all that is needed for stability}.
\end{remark}
\section{Summary, \textcolor{green}{conclusions and outlook}}\label{sec:conclusion}
We have shown that a specific skew-symmetric form of linear and nonlinear problem leads to \textcolor{green}{energy and entropy bounds} for the compressible Euler equations. The skew-symmetric formulation automatically produced \textcolor{green}{energy and entropy} stable numerical schemes for the compressible Euler equations if these are formulated on summation-by-parts form. 

The derivation shoved that the skew-symmetric formulation required a coupled derivation of the matrices and the norm matrix. The derivations also indicated that the shift in number of boundary conditions might not occur precisely at Mach number = 1, but at a slightly lower number given by $2(\gamma-1)/(\gamma(2-\gamma))$ for $\gamma=1.4$. It also shoved that this ratio is identically one for  $\gamma= \sqrt{2} \approx 1.414$.

This paper together with  \cite{nordstrom2022linear-nonlinear} have shown that the incompressible Euler equations, the shallow water equations and the compressible Euler equations can all be transformed to skew-symmetric form. Once in that form stable, easy to apply nonlinear schemes follows if summation--by parts operators are used for the discretisation. No additional requirements, such as chain rules or sign requirements are needed.

\textcolor{green}{In future work we will continue the study of nonlinear boundary conditions, and especially it's relation to linear boundary procedures. We will also investigate presently unknown numerical advantages and disadvantages, such as stiffness, robustness, coarse mesh effects etc. In addition we will include dissipative effects, stemming from viscous terms in the Euler case, and bottom effects for the SWEs.}

\section*{Acknowledgments}

Many thanks to my colleagues Fredrik Laur{\'e}n and Andrew R. Winters for helpful comments on the manuscript. Jan Nordstr\"{o}m was supported by Vetenskapsr{\aa}det, Sweden [award no.~2018-05084 VR and 2021-05484 VR] and the Swedish e-Science Research Center (SeRC).

\bibliographystyle{elsarticle-num}
\bibliography{References_andrew,References_Fredrik}

\end{document}